\newtheorem{theorem}{Theorem}
\newtheorem{conjecture}{Conjecture}
\newtheorem{lemma}{Lemma}
\newtheorem{claim}{Claim}
\newtheorem{corollary}{Corollary}
\title{Linear programming based approximation for unweighted\\ 
induced matchings  ---  breaking the $\Delta$ barrier}
\author{Julien Baste \and Maximilian F\"{u}rst  \and Dieter Rautenbach}
\date{}
\begin{document}

\maketitle

{\small 
\begin{center}
Institute of Optimization and Operations Research, Ulm University, Germany\\
\texttt{\{julien.baste,maximilian.fuerst,dieter.rautenbach\}@uni-ulm.de}
\end{center}
}

\begin{abstract}
A matching in a graph is induced if no two of its edges are joined by an edge,
and finding a large induced matching is a very hard problem.
Lin et al. 
(Approximating weighted induced matchings, Discrete Applied Mathematics 243 (2018) 304-310)
provide an approximation algorithm 
with ratio $\Delta$
for the weighted version of the induced matching problem
on graphs of maximum degree $\Delta$.
Their approach is based on an integer linear programming formulation
whose integrality gap is at least $\Delta-1$,
that is, their approach offers only little room for improvement in the weighted case.
For the unweighted case though, 
we conjecture that the integrality gap is at most $\frac{5}{8}\Delta+O(1)$,
and that also the approximation ratio can be improved at least to this value.
We provide primal-dual approximation algorithms with ratios 
$(1-\epsilon) \Delta + \frac{1}{2}$ for general $\Delta$
with $\epsilon \approx 0.02005$,
and $\frac{7}{3}$ for $\Delta=3$.
Furthermore, we prove a best-possible bound 
on the fractional induced matching number
in terms of the order and the maximum degree.
\end{abstract}
{\small 
\begin{tabular}{lp{13cm}}
{\bf Keywords:} & Induced matching; strong matching; approximation algorithm; linear programming; primal-dual approximation algorithm\\
{\bf MSC 2010:} & 05C70
\end{tabular}
}

\newpage

\section{Introduction}
We consider finite, simple, and undirected graphs, and use standard terminology.
A set $M$ of edges of a graph $G$ is an {\it induced matching} in $G$ 
if no two edges in $M$ are adjacent or joined by an edge,
that is, $M$ is an independent set of the square $L^2(G)$ 
of the line graph $L(G)$ of $G$.
The {\it induced matching number} $\nu_s(G)$ of $G$ is the maximum cardinality of an induced matching in $G$.

The problem to find a maximum induced matching in a given graph
does not allow an efficient approximation algorithm with approximation factor $n^{1/2-\epsilon}$ for some positive $\epsilon$, unless $P=NP$ \cite{orfigozv},
and it is APX-complete for $\Delta$-regular bipartite graphs \cite{dadelo}.
Several efficient approximation algorithms have been proposed for $\Delta$-regular graphs:
Duckworth, Manlove, and Zito \cite{dumazi,zi} showed that a simple greedy strategy has approximation ratio $\Delta-O(1)$.
Combining the greedy strategy with local search, 
Gotthilf and Lewenstein \cite{gole} improved this to $0.75\Delta+0.15$.
For $\Delta$-regular $\{ C_3,C_5\}$-free graphs, 
Rautenbach \cite{ra} showed that the algorithm from \cite{gole} has approximation ratio $0.708\bar{3}\Delta+0.425$. 
Finally, for $\Delta=3$, that is, for cubic graphs, Joos, Rautenbach, and Sasse \cite{jorasa} 
described an efficient algorithm with approximation ratio $\frac{9}{5}$.
All these approximation ratios for $\Delta$-regular graphs rely on the simple upper bound 
\begin{eqnarray}
\nu_s(G)\leq \frac{m(G)}{2\Delta-1},\label{e0}
\end{eqnarray}
which fails for not necessarily regular graphs of maximum degree $\Delta$.

Only very recently, Lin, Mestre, and Vasiliev \cite{limeva}
improved the straightforward approximation ratio of $2(\Delta-1)$ of the greedy algorithm \cite{zi}
applied to a graph of maximum degree $\Delta$.
Their approach relies on linear programming and a local ratio technique.
They actually consider the weighted version of the problem, 
and provide an efficient algorithm with approximation ratio $\Delta$.
As they show that the integrality gap of their integer linear programming formulation 
of the weighted induced matching problem is at least $\Delta-1$,
there is not much room for improvement 
of the approximation ratio using their approach.

In order to phrase the integer linear programming formulation of the maximum induced matching problem,
we introduce some notation.
Let $G$ be a graph.
For a vertex $u$ of $G$, let $\delta_G(u)$ be the set of edges of $G$ that are incident with $u$.
For an edge $uv$ of $G$, let 
$$\delta_G(uv)=\delta_G(u)\cup \delta_G(v),
\,\,\,\,\,\mbox{ and let }\,\,\,\,\,
C_G(uv)=\bigcup\limits_{w\in N_G[u]\cup N_G[v]}\delta_G(w).$$
Note that a set $M$ of edges in $G$ is an induced matching in $G$ 
\begin{itemize}
\item if and only if 
$f\not\in C_G(e)$ for every two distinct edges $e$ and $f$ in $M$
\item if and only if 
$\delta_G(e)$ contains at most one edge from $M$ for every edge $e$ of $G$. 
\end{itemize}
The second equivalence motivates the following 
(unweighted version of the) integer linear program from \cite{limeva}:
\begin{eqnarray}
\begin{array}{rrcll}
\max & \sum\limits_{e\in E(G)}x_e & & & \\
s.th. & \sum\limits_{f\in \delta_G(e)}x_f & \leq & 1 & \forall e\in E(G)\\
& x_e & \in & \{ 0,1\} & \forall e\in E(G)
\end{array}\label{e1}
\end{eqnarray}
Clearly, the value of (\ref{e1}) equals $\nu_s(G)$, 
and $\{ e\in E(G):x_e=1\}$ is an induced matching for every feasible solution $(x_e)_{e\in E(G)}$ of (\ref{e1}).
We consider the relaxation $(P)$ of (\ref{e1}) together with its dual linear program $(D)$:
\begin{eqnarray*}
(P)\left\{\begin{array}{rrcll}
\max & \sum\limits_{e\in E(G)}x_e & & & \\
s.th. & \sum\limits_{f\in \delta_G(e)}x_f & \leq & 1 & \forall e\in E(G)\\
& x_e & \geq & 0 & \forall e\in E(G)
\end{array}\right.
\hspace{2cm}
(D)\left\{\begin{array}{rrcll}
\min & \sum\limits_{e\in E(G)}y_e & & & \\
s.th. & \sum\limits_{f\in \delta_G(e)}y_f & \geq & 1 & \forall e\in E(G)\\
& y_e & \geq & 0 & \forall e\in E(G)
\end{array}\right.
\end{eqnarray*}
Let $\nu_s^*(G)$ and $\tau_s^*(G)$ denote the optimum values of the linear programs $(P)$ and $(D)$, respectively.
If $(x_e)_{e\in E(G)}$ is some feasible solution of $(P)$ or $(D)$, and $F\subseteq E(G)$, 
then let $x(F) = \sum\limits_{e \in F}{x_e}$,
in particular, $\sum\limits_{f\in \delta_G(e)}x_f=x\left(\delta_G(e)\right)$.

By linear programming duality, 
\begin{eqnarray}
\nu_s(G)\leq \nu_s^*(G)=\tau_s^*(G).\label{e3}
\end{eqnarray}
If $G$ is $\Delta$-regular, 
then setting $x_e=\frac{1}{2\Delta-1}$ for every edge $e$ of $G$ 
yields optimal solutions for $(P)$ and $(D)$
of value $\frac{m(G)}{2\Delta-1}$,
which implies that (\ref{e0}) follows immediately from (\ref{e3}).
The results of Lin et al. \cite{limeva} imply 
that the integrality gap of the weighted version of (\ref{e1}) is 
at most $\Delta$ and 
at least $\Delta-1$.

We conjecture that this can be improved considerably for unweighted graphs.

\begin{conjecture}\label{conjecture1}
If $G$ is a graph of maximum degree $\Delta$, then
\begin{eqnarray}\label{e4}
\frac{\nu_s^*(G)}{\nu_s(G)}\leq 
\begin{cases}
\frac{5\Delta^2}{8\Delta-4} & \mbox{, if $\Delta$ is even,}\\[3mm]
\frac{5\Delta^3-21\Delta^2+7\Delta+1}{8\Delta^2-36\Delta+20} & \mbox{, if $\Delta$ is odd}
\end{cases}
\end{eqnarray}
with equality in (\ref{e4}) if and only if $G$ arises by replacing the five vertices of 
the cycle $C_5$ of order five
with independent sets of cardinalities 
$\left\lfloor\frac{\Delta}{2}\right\rfloor$,
$\left\lfloor\frac{\Delta}{2}\right\rfloor$,
$\left\lfloor\frac{\Delta}{2}\right\rfloor$,
$\left\lceil\frac{\Delta}{2}\right\rceil$, and
$\left\lceil\frac{\Delta}{2}\right\rceil$ in this cyclic order.
\end{conjecture}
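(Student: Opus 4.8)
The plan is to reduce the global ratio bound to a purely local inequality about a single edge, and then to establish that local inequality by exploiting the constraints of $(P)$ throughout the distance-two neighbourhood of that edge. Fix an optimal solution $x=(x_e)_{e\in E(G)}$ of $(P)$, so that $x\geq 0$, $x(\delta_G(f))\leq 1$ for every edge $f$, and $\sum_{e\in E(G)}x_e=\nu_s^*(G)$. Choose a \emph{maximal} induced matching $M$ of $G$. Maximality implies that every edge $f$ of $G$ satisfies $f\in C_G(e)$ for some $e\in M$ (otherwise $M\cup\{f\}$ would be a larger induced matching), so that $E(G)=\bigcup_{e\in M}C_G(e)$. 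Since $x\geq 0$, this covering yields
\[
\nu_s^*(G)=\sum_{f\in E(G)}x_f\leq \sum_{e\in M}x\left(C_G(e)\right)\leq |M|\cdot\max_{e\in M}x\left(C_G(e)\right)\leq \nu_s(G)\cdot\max_{e\in M}x\left(C_G(e)\right),
\]
using $|M|\leq \nu_s(G)$ in the last step. Hence the conjectured ratio follows once one proves the following local bound, which I regard as the heart of the matter: for every edge $e=uv$ of a graph of maximum degree $\Delta$ and every feasible $x$ of $(P)$, one has $x(C_G(e))\leq c(\Delta)$, where $c(\Delta)$ denotes the right-hand side of (\ref{e4}).

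To attack the local bound I would rewrite $C_G(e)$ as the set of all edges incident with the vertex set $S=N_G[u]\cup N_G[v]$, so that $|S|\leq 2\Delta$ and $x(C_G(e))=\sum_{w\in S}x(\delta_G(w))-x(E[S])$, where $E[S]$ is the set of edges with both endpoints in $S$. The naive estimate $x(\delta_G(w))\leq 1$ alone only gives $x(C_G(e))\leq 2\Delta$, i.e. exactly the greedy barrier, so the improvement to roughly $\frac{5}{8}\Delta$ must come from the interaction between the constraints of the edges inside $S$ and those of the edges crossing from $S$ to distance two. Concretely I would pass to the dual view: for a fixed graph, $\max_x x(C_G(e))$ equals the minimum total weight of a fractional cover of $C_G(e)$ by the edge-stars $\{\delta_G(f)\}$, and I would construct such a cover of weight $c(\Delta)$ tailored to this two-layer structure. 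The extremal balance is the blow-up of $C_5$: splitting $S$ into four consecutive classes $V_1,V_2,V_3,V_5$ and letting the fifth class $V_4$ carry the distance-two endpoints reproduces the pentagonal configuration, and optimising the class sizes subject to the maximum degree being at most $\Delta$ forces the split $\lfloor\frac{\Delta}{2}\rfloor,\lfloor\frac{\Delta}{2}\rfloor,\lfloor\frac{\Delta}{2}\rfloor,\lceil\frac{\Delta}{2}\rceil,\lceil\frac{\Delta}{2}\rceil$; the floor/ceiling rounding is precisely what separates the even and odd formulas in (\ref{e4}).

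For the characterisation of equality I would trace every inequality above back to tightness. Equality forces $|M|=\nu_s(G)$, so that $M$ is simultaneously maximum and maximal; it forces the balls $C_G(e)$ with $e\in M$ to partition $E(G)$, so that no edge lies in two of them; and it forces each local second neighbourhood to be the extremal pentagon with the prescribed class sizes. Matching these local pentagons consistently across the partition then pins down $G$ as a single global $C_5$ blow-up. The main obstacle is unquestionably the local bound of the second paragraph: the vertex-by-vertex counting loses more than a factor of three, so one must show that among \emph{all} graphs of maximum degree $\Delta$ the pentagonal two-layer arrangement simultaneously maximises the fractional mass packed into an edge-ball and attains the exact rounded value $c(\Delta)$. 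Ruling out every competing second-neighbourhood structure, and doing so with the sharp floor/ceiling constant rather than merely $\frac{5}{8}\Delta+O(1)$, is where essentially all of the difficulty lies.
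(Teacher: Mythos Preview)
The statement you are trying to prove is stated in the paper as an \emph{open conjecture}; the paper does not prove it and only verifies that the blown-up $C_5$ attains the claimed value of $\nu_s^*(G)/\nu_s(G)$.

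Your reduction via a maximal induced matching is correct as far as it goes, but the ``local bound'' you isolate as the heart of the matter is simply false, so the whole plan collapses there. Take any $\Delta$-regular graph $G$ of girth at least $6$ and set $x_e=\frac{1}{2\Delta-1}$ for every edge $e$; this is feasible (indeed optimal) for $(P)$. For an edge $e=uv$ the set $N_G[u]\cup N_G[v]$ has exactly $2\Delta$ vertices, the only edges inside it are the $2\Delta-1$ edges of $\delta_G(u)\cup\delta_G(v)$, and hence
\[
|C_G(e)|=2\Delta\cdot\Delta-(2\Delta-1)=2\Delta^2-2\Delta+1,
\qquad
x(C_G(e))=\frac{2\Delta^2-2\Delta+1}{2\Delta-1}=\Delta-\frac{\Delta-1}{2\Delta-1}\approx\Delta-\tfrac{1}{2}.
\]
This is of order $\Delta$, not $\tfrac{5}{8}\Delta$; already for $\Delta=4$ one gets $x(C_G(e))=\tfrac{25}{7}>\tfrac{20}{7}=c(4)$. (This is exactly why the paper's Lemma~\ref{l1} works with the hypothesis $x(C_G(e))\geq f$ for $f$ close to $\Delta$: such edges abound.) The blown-up $C_5$ is \emph{not} extremal for $x(C_G(e))$; it is extremal for the global ratio because it combines a moderately large $x(C_G(e))$ with $\nu_s(G)=1$, so that a single ball $C_G(e)$ already covers all of $E(G)$.

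What your covering inequality really shows is $\nu_s^*(G)\leq \nu_s(G)\cdot\max_e x(C_G(e))$, and the right-hand maximum can be as large as roughly $\Delta$. The slack that brings the true ratio down to about $\tfrac{5}{8}\Delta$ comes from the heavy overlap among the balls $C_G(e)$, which your first inequality throws away. Any proof of the conjecture will have to control that overlap globally (or, equivalently, improve the lower bound on $\nu_s(G)$ beyond the $m/(1.5\Delta^2-0.5\Delta)$ guarantee); it cannot be reduced to a pointwise bound on a single $x(C_G(e))$.
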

Note that the extremal graph in Conjecture \ref{conjecture1} 
also appears in Erd\H{o}s and Ne\v{s}et\v{r}il's famous open conjecture 
on the strong chromatic index \cite{fascgytu}. 
If $\Delta$ is even, 
then this blown-up $C_5$ is $\Delta$-regular, and $\nu_s^*(G)$ equals $\frac{m(G)}{2\Delta-1}=\frac{5\Delta^2}{8\Delta-4}$.
If $\Delta$ is odd, then setting 
\begin{itemize}
\item $x_e=\frac{\Delta-5}{2\Delta^2-9\Delta+5}$ for the edges $e$ between an independent set of order $\frac{\Delta-1}{2}$ and an independent set of order $\frac{\Delta+1}{2}$, 
and setting
\item $x_e=\frac{\Delta-3}{2\Delta^2-9\Delta+5}$ for all remaining edges $e$
\end{itemize}
yields optimal solutions for $(P)$ and $(D)$,
which explains the specific value in Conjecture \ref{conjecture1}.

Gotthilf and Lewenstein \cite{gole} 
obtain the approximation ratio $0.75\Delta+0.15$
by providing a polynomial time algorithm 
that computes an induced matching of size at least
\begin{eqnarray} 
\frac{m(G)}{1.5\Delta^2-0.5\Delta}\label{e5}
\end{eqnarray}
in a given not necessarily regular graph $G$ of maximum degree $\Delta$
(cf. also \cite{felera} choosing $f=3\Delta^2/2-\Delta/2$ and $g=0$ in Theorem 2(ii) and in the proof of Corollary 3).
For $\Delta$-regular graphs, it follows that the integrality gap of (\ref{e1})
is as most $\frac{1.5 \Delta^2 - 0.5\Delta}{2\Delta -1}=\frac{3}{4}\Delta+O(1)$.

We proceed to our results;
all proofs are postponed to the following sections.
Our first result is a best-possible upper bound on the fractional induced matching number.
Let $T^*$ be the tree that arises by subdividing each edge of 
the star $K_{1,\Delta}$ of order $\Delta+1$ once.
\begin{theorem} \label{t1}
If $G$ is a graph of maximum degree at most $\Delta$
such that no component of $G$ has order at most $2$, then
$\nu_s^*(G) \leq \frac{\Delta}{2\Delta +1} n(G)$
with equality if and only if each component of $G$ is isomorphic to $T^*$.
\end{theorem}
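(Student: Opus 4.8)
The plan is to analyze an optimal solution $x$ of the primal linear program $(P)$ directly and to translate the bound into a statement about \emph{slack}. For every vertex $v$ set $p_v=x\big(\delta_G(v)\big)$ and $s_v=1-p_v$. Since $\delta_G(v)\subseteq\delta_G(e)$ for every edge $e$ incident with $v$, feasibility of $x$ forces $p_v\le 1$, so $s_v\ge 0$ for every non-isolated vertex; as no component of $G$ has order at most $2$, every vertex is non-isolated. Because each edge is counted at both endpoints, $\sum_{v\in V(G)}p_v=2\sum_{e\in E(G)}x_e=2\nu_s^*(G)$, and hence the desired inequality is equivalent to the slack estimate
\[
\sum_{v\in V(G)} s_v \ \ge\ \frac{1}{2\Delta+1}\,n(G).
\]
Thus the entire task reduces to guaranteeing, on average, a slack of $\tfrac{1}{2\Delta+1}$ per vertex.

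The local input comes from the identity $\delta_G(u)\cap\delta_G(v)=\{uv\}$, valid since $G$ is simple: for every edge $uv$ it gives $p_u+p_v-x_{uv}=x\big(\delta_G(uv)\big)\le 1$, that is $s_u+s_v\ge 1-x_{uv}$. Summing the instances coming from the edges at a fixed vertex $v$ and using $\sum_{e\in\delta_G(v)}x_e=p_v$ yields the key per-vertex inequality
\[
\sum_{u\in N_G(v)} s_u \ \ge\ \big(|\delta_G(v)|-1\big)\,(1-s_v).
\]
I would first record the effect of summing this over all $v$ with unit weights, which gives $\sum_v s_v\ge\frac{2|E(G)|-n(G)}{2\Delta-1}$ and already settles every graph that is dense enough, say with at least $\frac{2\Delta}{2\Delta+1}n(G)$ edges (in particular all $\Delta$-regular graphs with $\Delta\ge 2$, where there is room to spare). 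The difficulty is concentrated entirely in the sparse parts of $G$---leaves and chains of degree-$2$ vertices---where this aggregate, degree-based count is too weak, because such vertices may be saturated ($s_v=0$) without forcing enough compensating slack through the inequality above.

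To handle these I would use a sharper distance-two relation: if $m$ has exactly two neighbors $\ell$ and $c$, then eliminating $x_{mc}=p_m-x_{m\ell}$ from the edge inequality for $mc$ gives $p_c+x_{m\ell}\le 1$; when $\ell$ is a leaf this reads $s_c\ge p_\ell=1-s_\ell$, forcing a full unit of slack to appear at the far end of a pendant path. The combination step is then a discharging argument: assign to each vertex the charge $s_v-\tfrac{1}{2\Delta+1}$ and move charge from vertices of large slack to the deficient ones along the routes identified by the distance-two relation, so that every vertex ends nonnegative. \emph{Making this routing balance exactly is the main obstacle}: a branch vertex of degree $d$ must absorb the deficiencies of the entire pendant structure hanging off it without overspending its own slack, and the discharging rule must respect precisely the numerology of $T^*$, where one degree-$\Delta$ vertex carries a single unit of slack that pays for $2\Delta$ deficient vertices, each short by $\tfrac{1}{2\Delta+1}$.

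For the characterization of equality I would track tightness. Equality in the slack bound forces every inequality used for a given component to hold with equality---the edge relations, the per-vertex inequalities, and the distance-two relations simultaneously. This pins down the local structure: every vertex is either a leaf, or a degree-$2$ vertex with no slack, or a branch vertex, and tightness forces a unique branch vertex of degree exactly $\Delta$ carrying slack $1$; connectivity of the component then forces it to be exactly the subdivided star $T^*$. Conversely, the computation already in the excerpt (the value $\Delta=\frac{\Delta}{2\Delta+1}\,n(T^*)$) shows that $T^*$ attains equality, so disjoint copies of $T^*$ are precisely the equality graphs.
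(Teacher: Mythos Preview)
Your approach is genuinely different from the paper's. The paper proves the bound by \emph{duality}: it constructs, by induction on $n(G)$, an explicit feasible solution $(y_e)$ of $(D)$ with $y(E(G))\le\frac{\Delta}{2\Delta+1}n(G)$, carrying along the auxiliary boundary condition $y(\delta_G(u))\ge\tfrac12$ for every vertex of degree below $\Delta$; this condition is what lets the induction glue a locally defined dual piece onto the recursively obtained one without revisiting the smaller graph. Your proposal instead analyses an optimal \emph{primal} solution directly and rewrites the bound as the slack inequality $\sum_v s_v\ge n(G)/(2\Delta+1)$. Your derivations of the edge inequality $s_u+s_v\ge 1-x_{uv}$, the per-vertex inequality, and the distance-two relation are all correct, and the aggregate estimate $\sum_v s_v\ge(2m-n)/(2\Delta-1)$ really does handle the dense range. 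The slack reformulation is clean and the discharging idea aimed at the $T^*$ numerology is natural.

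But the proof has a real gap, and you name it yourself: the discharging is ``the main obstacle,'' and it is not carried out. You give no rules and no verification that every vertex ends nonnegative. The single distance-two relation you isolate (leaf--degree\nobreakdash-two--centre) is too narrow to serve as the only routing mechanism: a leaf attached directly to a branch vertex, or a pendant path of length greater than two, or a degree-two vertex whose neighbours are both non-leaves, all fall outside it, and covering them forces a structural case analysis at least as intricate as the paper's. Your equality argument is likewise only a sketch: the sentence ``tightness forces a unique branch vertex of degree exactly $\Delta$ carrying slack $1$'' is an assertion, not a proof, and since the primal optimum on a component is generally not unique, tracking tightness through one chosen optimum needs care. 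The paper sidesteps both issues: the auxiliary dual condition absorbs the boundary bookkeeping, and equality is handled by exhibiting, for each non-$T^*$ configuration that could arise in the induction, a strictly cheaper dual solution. If you want to complete your primal/slack route, you will need explicit discharging rules covering all local configurations and a verification of nonnegativity---work comparable in scope to the paper's case analysis, not a corollary of the inequalities you have written down.
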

Combining Theorem \ref{t1} with the main result from \cite{jorasa} 
yields an approximation ratio of $18/7$ for subcubic graphs.
Our second result improves this.
\begin{theorem} \label{t2}
There is an efficient algorithm that,
for a given subcubic graph $G$,
produces
an induced matching $M$ in $G$ as well as 
a feasible solution $(y_e)_{e\in E(G)}$ of $(D)$
with 
$|M|\geq \frac{3}{7}y(E(G))$.
\end{theorem}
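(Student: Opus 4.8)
The plan is to design an iterative primal-dual algorithm that maintains a partial induced matching $M$ and a partial dual assignment $y$, processing the graph region by region and charging the growth of $y(E(G))$ against the growth of $|M|$. I would maintain two invariants: first, every edge that has already been permanently decided (either put into $M$ or forbidden from $M$ because it would conflict with an edge of $M$) has its dual constraint $y(\delta_G(\cdot))\ge 1$ satisfied by the end; and second, at all times $y(E(G))\le\frac{7}{3}|M|$. Since the algorithm terminates with every edge of $G$ decided, the first invariant yields feasibility of $y$ for $(D)$, and the second yields the claimed inequality $|M|\ge\frac{3}{7}y(E(G))$, which together with (\ref{e3}) certifies the approximation ratio.

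The core of the algorithm is a selection rule that, in each round, picks an edge $e=uv$ to add to $M$ on the basis of the local degree pattern around $e$, preferring edges incident with low-degree vertices. Having added $e$, I would set $y_e$ close to $1$, which already satisfies the dual constraint of $e$ itself and of every edge sharing an endpoint with $e$, since any such edge $f$ has $e\in\delta_G(f)$ and thus sees $y_e$. The delicate point is the treatment of the edges at distance two from $e$, namely the edges incident with $N_G(u)\cup N_G(v)$ but not with $u$ or $v$: paying for all of these locally would cost up to one unit per neighbour of $u$ and $v$, hence up to $4$ per matching edge, far exceeding $\frac{7}{3}$. Instead, I would pay locally only for those distance-two edges whose far endpoint must be removed (for instance a leaf, whose constraint cannot be satisfied elsewhere), and defer the remaining distance-two constraints, letting them be satisfied by dual weight placed later, when the region around their far endpoint is processed. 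Because $G$ is subcubic, each deferred constraint can be shared among the at most three matching edges whose neighbourhoods meet at its far endpoint, so that the amortized local cost of the distance-two edges around $e$ is at most $\frac{4}{3}$, giving $1+\frac{4}{3}=\frac{7}{3}$ per edge of $M$.

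To make the deferral and sharing rigorous I would phrase each round as a reduction on a residual structure consisting of the current graph together with a bounded list of pending boundary constraints, and I would verify the per-round bound by a case analysis over the possible local configurations of $e=uv$ in a subcubic graph: the degrees of $u$ and $v$, the degrees of their neighbours, and the adjacencies inside $N_G[u]\cup N_G[v]$ (triangles, short cycles, and shared neighbours). For each configuration I would exhibit an explicit assignment of dual weight to the edges of $\delta_G(e)$ and to the few edges that must be paid for locally, and then check both that all decided edges are covered and that the total new weight, after crediting the deferred constraints to future rounds, does not exceed $\frac{7}{3}$.

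I expect the case analysis to be the main obstacle, since the accounting is tight. The worst configurations are those mimicking the blown-up $C_5$ that is extremal in Conjecture~\ref{conjecture1}, together with local obstructions such as leaves, triangles, and pairs of adjacent degree-three vertices whose neighbourhoods overlap; each of these must be handled so that the constant $\frac{7}{3}$ is met exactly. Keeping the bookkeeping of deferred constraints consistent across rounds—ensuring that no distance-two constraint is charged twice and that none is forgotten—is where the argument is most error-prone, and designing the selection rule so that some configuration admitting the full $\frac{7}{3}$ slack is always available is the crux that makes the whole scheme go through.
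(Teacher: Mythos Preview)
Your high-level plan matches the paper's proof: a recursive primal-dual procedure that in each round selects an edge $v_0v_1$ incident with a minimum-degree vertex, adds it to $M$, deletes a local subgraph $H$ containing $N_G[v_0]\cup N_G[v_1]$, recurses on the remainder $G'$, and assigns dual weight on $E(H)$ totalling at most $\tfrac{7}{3}$, with feasibility established by an exhaustive case analysis over the possible shapes of $H$.

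The gap is in your mechanism for the boundary. Your ``list of pending constraints'' and the amortized $1+\tfrac{4}{3}$ accounting are too loose to carry the case analysis; in particular, setting $y_{v_0v_1}$ ``close to $1$'' is not what works (in many of the paper's configurations $y_{v_0v_1}$ is $\tfrac{1}{3}$ or even $0$). The paper replaces all of this by a single stateless vertex invariant: the recursive solution must satisfy $y(\delta_{G'}(u))\ge\tfrac{1}{3}$ for every vertex $u$ of degree at most $2$ in $G'$. In the current round one then sets $y_e=0$ on every edge crossing from $H$ to $G'$, and requires of the local assignment that (b) $y(\delta_H(u))\ge\tfrac{2}{3}$ for each $u\in(N_G[v_0]\cup N_G[v_1])\setminus\{v_0,v_1\}$ with $d_H(u)\le 2$, and (c) $y(\delta_H(u))\ge\tfrac{1}{3}$ for each $u\in\{v_0,v_1\}\cup I$ with $d_H(u)\le 2$, where $I$ is the set of vertices isolated in $G-(N_G[v_0]\cup N_G[v_1])$, which must be absorbed into $H$. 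A crossing edge $uw$ with $u\in V(H)$, $w\in V(G')$ then automatically has $y(\delta_G(uw))\ge\tfrac{2}{3}+\tfrac{1}{3}=1$, with no cross-round bookkeeping at all. With this invariant and the inclusion of $I$, the set of possible $H$ becomes finite; the paper lists $56$ configurations and gives explicit weights for each. Your sketch is missing precisely this $\tfrac{1}{3}$--$\tfrac{2}{3}$ device, and without it the tight configurations (the ones you correctly anticipate) cannot be balanced.
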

Our final result concerns general maximum degrees.
\begin{theorem} \label{t3}
There is an efficient algorithm that,
for a given graph $G$ of maximum degree at most $\Delta$ for some $\Delta \geq 3$,
produces
an induced matching $M$ in $G$ with 
$$|M|\geq \frac{\nu_s^*(G)}{(1-\epsilon)\Delta + \frac{1}{2}}\mbox{ where }\epsilon \approx 0.02005.$$
\end{theorem}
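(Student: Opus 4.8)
The plan is to cast the algorithm in the primal--dual framework suggested by the introduction: we will construct, in polynomial time, an induced matching $M$ in $G$ together with a \emph{feasible} solution $(y_e)_{e\in E(G)}$ of $(D)$ for which
\[
y(E(G))\ \le\ \left((1-\epsilon)\Delta+\tfrac12\right)|M|.
\]
Once this is achieved, weak duality and (\ref{e3}) finish the proof immediately, since every feasible solution of the minimization program $(D)$ satisfies $y(E(G))\ge \tau_s^*(G)=\nu_s^*(G)$, and therefore
\[
\nu_s^*(G)\ \le\ y(E(G))\ \le\ \left((1-\epsilon)\Delta+\tfrac12\right)|M|.
\]
Thus the entire difficulty is concentrated in producing a matching and a cheap dual certificate simultaneously.

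The natural starting point is the greedy construction of a maximal induced matching: repeatedly select an edge $e=uv$, add it to $M$, and delete from further consideration all edges of $C_G(e)$, i.e.\ exactly the edges that $e$ forbids. To turn this into a dual certificate we must, for each selected $e$, deposit dual weight on edges near $e$ so that the constraint $y(\delta_G(g))\ge 1$ becomes satisfied for every forbidden edge $g\in C_G(e)$; the cost charged to the matching edge $e$ is the total weight so deposited. Bounding $|C_G(e)|$ crudely by roughly $2\Delta^2$ and spreading the uniform weight $\frac{1}{2\Delta-1}$ --- which is exactly the dual that certifies (\ref{e0}) in the $\Delta$-regular case --- yields a per-edge charge of about $\Delta$, recovering the barrier but not breaking it. Two phenomena prevent a better bound out of the box: first, vertices of degree below $\Delta$ leave the uniform dual infeasible (this is precisely the regime of Theorem \ref{t1} and the tree $T^*$), so extra weight must be placed around low-degree endpoints; second, the charge is genuinely close to $\Delta$ only when the neighbourhood of $e$ is dense and almost regular, as in the blown-up short cycles underlying Conjecture \ref{conjecture1}.

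The key to breaking the barrier is to show that the two \emph{expensive} situations are also \emph{improvable}. I would first prove a local estimate: whenever the selected edge $e$ dominates close to the worst-case number of edges, its second neighbourhood must contain a prescribed dense, near-regular configuration; and in every such configuration one can replace the single greedy edge $e$ by two edges of an induced matching supported on that configuration (a local swap of the Gotthilf--Lewenstein type \cite{gole}), strictly increasing $|M|$ without destroying the matching property elsewhere. Interleaving these swaps with the greedy selection guarantees that a fixed fraction of the selected edges are either cheap (charge bounded away from $\Delta$) or participate in an improvement that produces an extra matching edge; amortizing, each unit of deposited dual weight is paid for by at least $1/\big((1-\epsilon)\Delta+\tfrac12\big)$ matching edges. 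The additive $\tfrac12$ absorbs the boundary effects of the swaps and the rounding in the counting of $C_G(e)$, while the multiplicative gain $\epsilon$ is exactly the fraction of dual weight saved per expensive step.

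The main obstacle, and where the value $\epsilon\approx 0.02005$ is forced, is the quantitative trade-off of the previous paragraph. One has to (i) make the dichotomy between ``cheap'' and ``improvable'' steps sharp enough that no intermediate regime escapes, which requires a careful, degree-sensitive accounting of $|C_G(e)|$ and of the weight needed around low-degree endpoints to keep $y$ feasible; (ii) guarantee that the local improvements can be charged disjointly, so that the per-step savings aggregate into a global factor rather than cancelling against one another; and (iii) choose the single free parameter governing ``how dense is dense enough to swap'' so as to maximize the worst case over both branches of the dichotomy. Balancing these two branches against each other yields an optimization in that parameter whose optimum is the stated $\epsilon\approx 0.02005$; I expect verifying dual feasibility uniformly over all degree sequences --- not merely the regular case --- to be the most delicate part of the argument.
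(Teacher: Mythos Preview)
Your approach is genuinely different from the paper's, and as written it has a substantive gap.

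The paper does not construct a dual certificate for Theorem~\ref{t3}. It works entirely with an optimal \emph{primal} solution $(x_e)$: a preprocessing loop greedily removes edges $e$ with $x(C_G(e))\le f=(1-\epsilon)\Delta+\tfrac12$, charging at most $f$ units of primal mass to each such edge; on the residual graph $G'$ every remaining edge satisfies $x(C_{G'}(e))>f$, and the key technical result, Lemma~\ref{l1}, shows that under this hypothesis $x(E(G'))\le (1-\epsilon)m(G')/(1.5\Delta)$. Gotthilf--Lewenstein is then invoked as a black box on $G'$ to produce an induced matching of size at least $m(G')/(1.5\Delta^2)\ge x(E(G'))/f$, and combining the two pieces gives the bound. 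The constant $\epsilon\approx 0.02005$ is the optimum of an explicit quadratic program $(Q)$ balancing two parameters in the proof of Lemma~\ref{l1} (a degree threshold $c\Delta$ isolating ``low-degree'' vertices, and the derived slack $g=\epsilon/(1-c)$); it does not come from any cheap-versus-improvable dichotomy on greedy steps.

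Your plan hinges on an unproven structural claim: that whenever the combinatorial charge of a greedy edge is near the worst case, the local configuration necessarily admits a $2$-for-$1$ swap that enlarges $|M|$ without conflicting with earlier choices, and that these savings amortize to a global factor. You give no mechanism for establishing this dichotomy exhaustively, no concrete construction of the dual weights around low-degree endpoints, and no calculation tying your trade-off to the specific value $0.02005$; that number is pinned down by the constraints of $(Q)$, which encode a completely different balance (between the degree threshold $c$ and the LP-mass bound on neighbours of low-degree vertices). The missing idea is precisely Lemma~\ref{l1}: a bound on the primal LP value in terms of $m(G')$ once the edges with small $x(C_G(e))$ have been stripped out, which is what lets the known $m/(1.5\Delta^2)$ lower bound on $\nu_s$ be played off against $\nu_s^*$ rather than merely against $m$. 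Without an analogue of that lemma, your outline does not yet contain a proof.
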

The last two theorems imply, in particular, 
that the problem to find a maximum induced matching 
in the considered graphs 
can be approximated in polynomial time within ratios of $\frac{7}{3}$
and $(1-\epsilon)\Delta + \frac{1}{2}$, respectively.
Theorem \ref{t3} allows an interesting corollary.
\begin{corollary}
If $G$ is a graph of maximum degree at most $\Delta$, then
\begin{align*}
 \nu_s(G) \geq \frac{\nu(G)}{2(1-\epsilon)\Delta +1}\mbox{ where }\epsilon \approx 0.02005,
\end{align*}
where $\nu(G)$ denotes the matching number of $G$.
\end{corollary}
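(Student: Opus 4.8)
The plan is to derive the corollary directly from Theorem \ref{t3} by sandwiching the fractional induced matching number $\nu_s^*(G)$ between the ordinary matching number $\nu(G)$ and the induced matching number $\nu_s(G)$. Since the algorithm of Theorem \ref{t3} outputs an \emph{induced} matching $M$, we automatically have $\nu_s(G) \geq |M| \geq \frac{\nu_s^*(G)}{(1-\epsilon)\Delta + \frac{1}{2}}$, so it suffices to prove the single extra lower bound $\nu_s^*(G) \geq \frac{1}{2}\nu(G)$ and then chain the two estimates together.

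To establish this lower bound, I would exhibit an explicit feasible point of the primal relaxation $(P)$ whose objective value equals $\frac{1}{2}\nu(G)$. First fix a maximum matching $M'$ of $G$, so that $|M'| = \nu(G)$, and define $x_e = \frac{1}{2}$ for every $e \in M'$ and $x_e = 0$ otherwise. The objective value of this point is $\sum_{e \in E(G)} x_e = \frac{1}{2}|M'| = \frac{1}{2}\nu(G)$, so the only thing left to verify is feasibility.

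Feasibility is the one genuine step, though it is short. For an arbitrary edge $e = uv$ of $G$ we must check that $x\left(\delta_G(e)\right) \leq 1$. Since $\delta_G(e) = \delta_G(u)\cup \delta_G(v)$ and $M'$ is a matching, at most one edge of $M'$ is incident with $u$ and at most one edge of $M'$ is incident with $v$; hence $\delta_G(e)$ contains at most two edges of $M'$, and therefore $x\left(\delta_G(e)\right) \leq 2\cdot \frac{1}{2} = 1$. This shows that $(x_e)_{e\in E(G)}$ is feasible for $(P)$, and since $\nu_s^*(G)$ is by definition the optimum value of $(P)$, we obtain $\nu_s^*(G) \geq \frac{1}{2}\nu(G)$.

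Combining the two inequalities yields $\nu_s(G) \geq \frac{\nu_s^*(G)}{(1-\epsilon)\Delta+\frac{1}{2}} \geq \frac{\nu(G)/2}{(1-\epsilon)\Delta+\frac{1}{2}} = \frac{\nu(G)}{2(1-\epsilon)\Delta+1}$, which is the claimed bound. I do not expect a real obstacle: all the difficulty is already absorbed into Theorem \ref{t3}, and the matching-to-fractional estimate is a one-line feasibility argument. It is worth noting that the factor $\frac{1}{2}$ assigned to each matching edge is precisely what turns the additive term $\frac{1}{2}$ in the denominator of Theorem \ref{t3} into the ``$+1$'' appearing in the denominator of the corollary.
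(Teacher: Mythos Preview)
Your proof is correct and follows essentially the same approach as the paper: take a maximum matching, assign weight $\tfrac{1}{2}$ to its edges and $0$ elsewhere to obtain a feasible solution of $(P)$ with value $\tfrac{1}{2}\nu(G)$, and then invoke Theorem~\ref{t3}. The only difference is that you spell out the feasibility check $x(\delta_G(e))\leq 1$ in detail, whereas the paper simply asserts it.
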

\begin{proof}
Let $M$ be some maximum matching in $G$.
Setting $x_e=1/2$ for every edge $e$ in $M$,
and $x_e=0$ otherwise, yields a feasible solution of $(P)$.
This implies $\nu_s^*(G)\geq x(E(G))=|M|/2$.
Now, Theorem \ref{t3} implies the statement.
\end{proof}
We close the introduction with some notation.
Let $G$ be a graph.
We denote by $n(G)$, $m(G)$, and $\Delta(G)$
the order, size, and maximum degree of $G$,
respectively.
For a set $X$ of vertices of $G$, let 
$$N_G(X) = \bigcup_{u \in X}{N_G(u)}\setminus X,
\,\,\,\,\,\mbox{ and }\,\,\,\,\,
N_G[X] = X\cup \bigcup_{u \in X}{N_G(u)}.$$
For two disjoint sets $X$ and $Y$ of vertices of $G$,
let $G[X]$ be the subgraph of $G$ induced by $X$,
$$E_G(X,Y) = \{uv \in E(G) : u \in X \mbox{ and } v \in Y\},$$
$E_G(X) = E(G[X])$,
$m_G(X,Y) = |E_G(X,Y)|$, and
$m_G(X) = |E_G(X)|$, respectively.
Finally, for a set $F$ of edges of $G$,
let $V(F)$ be the set of vertices of $G$
that are incident to some edge in $F$.

\section{Proof of Theorem \ref{t1}}
Let $G$ be a graph of maximum degree $\Delta$
such that no component of $G$ has order at most $2$,
in particular, $\Delta\geq 2$.
Note that $\nu_s^*(T^*)=\Delta=\frac{\Delta}{2\Delta +1}n(T^*)$.
Therefore, by (\ref{e3}),
it suffices to show the existence of a feasible solution $(y_e)_{e\in E(G)}$ of $(D)$ with
\begin{enumerate}[(i)]
\item $y(\delta_G(u)) \geq \frac{1}{2}$ for every vertex $u$ of degree less than $\Delta$, and
\item $y(E(G)) \leq \frac{\Delta}{2\Delta +1}n(G)$,
\end{enumerate}
such that (ii) holds with equality if and only if every component of $G$ is isomorphic to $T^*$.
We call such a feasible solution {\it good},
and we show the existence of a good solution by induction on the order $n(G)$.
Since the considered quantities are all additive with respect to the components,
we may assume that $G$ is connected.
If $G$ is $\Delta$-regular, then setting $y_e=\frac{1}{2\Delta -1}$ for every edge $e$ of $G$ yields a good solution.
Hence, we may assume that the minimum degree of $G$ is less than $\Delta$. 

Let $u$ be a vertex of minimum degree $\delta$.

\bigskip

\noindent \textbf{Case 1.} {\it $\delta=1$, and no component of $G-N_G[u]$ has order at most $2$.}

\bigskip

\noindent Let $v$ be the unique neighbor of $u$ in $G$,
let $w$ be some neighbor of $v$ distinct from $u$, and let $G^\prime = G - \{u,v,w\}$.
Let $I_1$ be the set of isolated vertices in $G^\prime$, 
and let $I_2$ be the set of vertices of the components of order $2$ in $G^\prime$. 
By induction, there is a good solution $(y^\prime_e)_{e\in E(G^\prime - (I_1 \cup I_2))}$ for $G^\prime - (I_1 \cup I_2)$.
Since the graph $G-\{u,v\}$ has no component of order at most $2$, 
each component of $G[I_1 \cup I_2]$ sends at least one edge to $w$. Hence, we obtain 
$|I_1| + \frac{|I_2|}{2} \leq  d_G(w) - 1 \leq  \Delta - 1$, which implies
\begin{align} \label{comp_leq2:case1}
|I_1| + |I_2|\leq 2|I_1| + |I_2| \leq  2(\Delta - 1).
\end{align}
Let $M$ be a set of edges in $E_G(\{w\}, I_1 \cup I_2)$ 
such that each component of $G[I_1 \cup I_2]$ is incident with exactly one edge in $M$.
Let
\begin{eqnarray*}
 y_e = 
 \begin{cases}
    y^\prime_e 	 & \mbox{, if } e \in E(G^\prime - (I_1 \cup I_2)) \mbox{,}\\
    \frac{1}{2}   	& \mbox{, if }  e \in M \cup E_G(I_2) \cup \{uv, vw\} \mbox{, and}\\
    0                 & \mbox{, otherwise.} 
 \end{cases} 
\end{eqnarray*}
It is easy to see that $(y_e)_{e\in E(G)}$ is a feasible solution of $(D)$ that satisfies (i).
By induction and (\ref{comp_leq2:case1}), we obtain that
\begin{align*}
y(E(G)) &= y'(E(G^\prime - (I_1 \cup I_2))) + \frac{1}{2}(|I_1|+|I_2| + 2) \\
	     &\leq \frac{\Delta}{2\Delta + 1}\Big(n(G) - (|I_1|+|I_2| +3)\Big) + \frac{1}{2}(|I_1|+|I_2| + 2) \\
	     &= \frac{\Delta}{2\Delta + 1}n(G) + \frac{|I_1| + |I_2| - 2(\Delta - 1)}{2 (2\Delta+1)} \\
	     &\leq \frac{\Delta}{2\Delta + 1}n(G).
\end{align*} 
Now, we assume that $y(E(G))= \frac{\Delta}{2\Delta + 1}n(G)$, 
and that $G$ is not isomorphic to $T^*$.
Equality in the above inequality chain implies $|I_1| + |I_2| = 2(\Delta -1)$, 
which, by (\ref{comp_leq2:case1}),
implies that $I_1 = \emptyset$, $|I_2| = 2 (\Delta -1)$, and $d_G(w) = \Delta$.
It follows that each component of order $2$ in $G^\prime$ sends exactly one edge to $w$. 
Moreover, 
$$y'(E(G^\prime - (I_1 \cup I_2)))=\frac{\Delta}{2\Delta + 1}\Big(n(G) - (|I_1|+|I_2| +3)\Big),$$
which implies that 
every component of $G^\prime - (I_1 \cup I_2)$ is isomorphic to $T^*$.
Since $G$ is not isomorphic to $T^*$, 
the vertex $v$ has a third neighbor $y$ distinct from $u$ and $w$,
which either belongs 
to $I_2$ 
or 
to $V(G^\prime)\setminus (I_1 \cup I_2)$.

First, we assume that $y \in I_2$. 
Let $M$ be a set of edges in $E_G(\{v,w\}, I_1 \cup I_2)$
such that each component of $G[I_1 \cup I_2]$ is incident with exactly one edge in $M$,
and $vy\in M$.
Let
\begin{eqnarray*}
 y_e = 
 \begin{cases}
    y^\prime_e 	& \mbox{, if }  e \in E(G^\prime - (I_1 \cup I_2)) \mbox{,}\\
    \frac{1}{2}   	& \mbox{, if }  e \in M \cup E_G(I_2) \cup \{uv\} \mbox{, and}\\
    0               & \mbox{, otherwise.} 
 \end{cases} 
\end{eqnarray*}
It is easy to see that $(y_e)_{e\in E(G)}$ is a feasible solution of $(D)$ that satisfies (i). 
As above, we obtain 
\begin{align*}
y(E(G)) &\leq \frac{\Delta}{2\Delta + 1}n(G) + \frac{|I_1| + |I_2| - 2(\Delta - 1)}{2 (2\Delta+1)} - \frac{1}{2}
	     <\frac{\Delta}{2\Delta + 1}n(G),	     
\end{align*} 
which completes the proof in this case.

Hence, we may assume that $v$ has no neighbor in $I_2$,
which implies $y\in V(G^\prime)\setminus (I_1 \cup I_2)$.
The component $H$ of $G^\prime - (I_1 \cup I_2)$ containing $y$ is isomorphic to $T^*$.
Let $f$ be the unique edge of $H$
that is incident to the vertex of degree $\Delta$ in $H$,
and has minimum distance to $y$.
Let
\begin{eqnarray*}
 y_e = 
 \begin{cases}
    y^\prime_e 	& \mbox{, if }  e \in E(G^\prime - (I_1 \cup I_2))\setminus E(H) \mbox{,}\\
   \frac{1}{2}   	& \mbox{, if }  e \in E_G(I_2) \cup \big(\delta_G(w) \setminus\{vw\}\big) 
   \cup \{uv,vy\} \cup \big(E(H)\setminus \{ f\}\big) \mbox{, and} \\
   0                    & \mbox{, otherwise.} 
 \end{cases} 
\end{eqnarray*}
It is easy to see that $(y_e)_{e\in E(G)}$ is a feasible solution of $(D)$ that satisfies (i). 
Furthermore, 
\begin{align*}
 y(E(G)) \leq \frac{\Delta}{2\Delta + 1} \left(n(G) - (4\Delta + 2) \right) + 2\Delta  - \frac{1}{2} <\frac{\Delta}{2\Delta +1} n(G), 
\end{align*}
which completes the proof in the first case.

\bigskip

\noindent \textbf{Case 2.} {\it Case 1 does not apply.}

\bigskip

\noindent Let $G^\prime = G-N_G[u]$,
let $I_1$ be the set of isolated vertices in $G^\prime$, 
and let $I_2$ be the set of vertices of the components of order $2$ in $G^\prime$. 
Note that $G^\prime$ differs slightly from Case 1.
Let $(y^\prime_e)_{e\in E(G^\prime - (I_1 \cup I_2))}$ be a good solution for $G^\prime - (I_1 \cup I_2)$.
If $\delta =1$, then each component of $G[I_1 \cup I_2]$ sends at least one edge into $N_G(u)$.
If $\delta \geq 2$, then each vertex in $I_1$ sends at least $\delta$ many edges into $N_G(u)$
while each vertex in $I_2$ sends at least $\delta-1$ edges into $N_G(u)$.
Hence, we obtain that
\begin{align} \label{comp_leq2:case2}
 \delta |I_1| + \max\left\{ (\delta-1)|I_2|, \frac{|I_2|}{2} \right\} \leq (\Delta-1) \delta.
\end{align}
If $\delta = 1$, then this implies that $|I_1| + |I_2| \leq 2(\Delta -1)<2\Delta-\delta$.
If $\delta \geq 2$, then this implies that 
$$|I_1| + |I_2| \leq 
\frac{\delta}{\delta-1}|I_1|+\frac{\delta-1}{\delta-1}|I_2|\leq 
(\Delta-1) \frac{\delta}{\delta-1} \leq 2\Delta - \delta,$$
and, since $\Delta>\delta$, 
equality in this last inequality chain only holds if 
$\delta=2$,
$I_1=\emptyset$, and 
$|I_2|=2(\Delta-1)$.

Let $M$ be a set of edges in $E_G(N_G(u), I_1 \cup I_2)$ such that
each component of $G[I_1 \cup I_2]$ is incident with exactly one edge in $M$.
Let
\begin{eqnarray*}
y_e =
 \begin{cases}
y^\prime_e 	& \mbox{, if }  e \in E(G^\prime - (I_1 \cup I_2)) \mbox{,}\\
\frac{1}{2}   	& \mbox{, if }  e \in M \cup E_G(I_2) \cup \delta_G(u) \mbox{, and}\\
0               & \mbox{, otherwise.} 
\end{cases} 
\end{eqnarray*}
For $\delta \geq 2$, it is easy to see that $(y_e)_{e\in E(G)}$ is a feasible solution of $(D)$ 
that satisfies (i). 
If $\delta = 1$, then, in view of Case 1, 
the set $I_1 \cup I_2$ is non-empty, which implies that the unique neighbor, say $v$, 
of $u$ is incident with an edge $vw$ such that $w \in I_1 \cup I_2$ and $y_{vw} = \frac{1}{2}$.
Hence, also in this case, 
$(y_e)_{e\in E(G)}$ is a feasible solution of $(D)$ that satisfies (i).
By induction and (\ref{comp_leq2:case2}), we obtain that
\begin{align*}
y(E(G)) 
	     &\leq \frac{\Delta}{2\Delta + 1}\Big(n(G) - (|I_1|+|I_2| + \delta + 1)\Big) + \frac{1}{2}(|I_1|+|I_2| + \delta) \\
	     &= \frac{\Delta}{2\Delta + 1}n(G) + \frac{|I_1| + |I_2| - (2\Delta - \delta)}{2 (2\Delta+1)} \\
	     &\leq \frac{\Delta}{2\Delta + 1}n(G).
\end{align*}
Now, we assume that $y(E(G)) = \frac{\Delta}{2\Delta + 1}n(G)$, 
and that $G$ is not isomorphic to $T^*$.
Equality in the above inequality chain implies $|I_1| + |I_2| = 2\Delta - \delta$.
As observed above, 
this implies $\delta = 2$, $I_1 = \emptyset$, and $|I_2| = 2(\Delta -1)$. 
It follows that every vertex in $I_2$ has degree exactly $2$,
every vertex in $N_G(u)$ has degree $\Delta$,
and $G'-(I_1\cup I_2)$ is empty.
Let $f$ be an edge incident with $u$.
Let
\begin{eqnarray*}
y_e = 
\begin{cases}
\frac{1}{2}& \mbox{, if } e \in E_G(N_G(u),I_2)\cup \big(\delta_G(u)\setminus \{ f\}\big) \mbox{, and}\\ 
0 &\mbox{, otherwise.}
\end{cases}
\end{eqnarray*}
Since $\Delta \geq 3$, 
it follows that $(y_e)_{e\in E(G)}$ is a feasible solution of $(D)$ that satisfies (i).  
Furthermore, 
$$y(E(G)) =  \Delta - \frac{1}{2} < 
\frac{\Delta}{2\Delta +1} (2\Delta+1) = \frac{\Delta}{2\Delta +1} n(G),$$ 
which completes the proof.

\section{Proof of Theorem \ref{t2}}

Let $G$ be a graph of maximum degree at most $3$.
In view of the $\frac{9}{5}$-approximation algorithm for cubic graphs given in \cite{jorasa},
we may assume that $G$ is not cubic.
Clearly, we may assume that $G$ has no isolated vertices.
We will describe an efficient recursive algorithm 
that constructs an induced matching $M$ in $G$
together with a feasible solution $(y_e)_{e\in E(G)}$ 
of the linear program $(D)$ such that
\begin{enumerate}[(i)]
\item $y(\delta_G(u)) \geq \frac{1}{3}$ for every vertex $u$ of degree at most $2$ in $G$, and
\item $y(E(G)) \leq \frac{7}{3} |M|$.
\end{enumerate}
We call the pair $\big(M,(y_e)_{e\in E(G)}\big)$ a {\it good solution pair for} $G$.

The algorithm performs the following steps:
\begin{itemize}
\item[(1)] {\it Select an edge $v_0v_1$ of $G$ incident with a vertex $v_0$ 
of minimum degree.} 
\end{itemize}
In view of the above assumptions, 
the minimum degree $d_G(v_0)$ is either $1$ or $2$.
Let $I$ be the set of isolated vertices of 
$G-\big(N_G[v_0]\cup N_G[v_1]\big)$, 
let $H=G\big[N_G[v_0]\cup N_G[v_1]\cup I\big]$,
and let $G'=G-V(H)$.
\begin{itemize}
\item[(2)] {\it Apply the algorithm recursively to $G'$
to obtain a good solution pair 
${\cal P}'=\big(M',(y_e)_{e\in E(G')}\big)$ for $G'$.} 
\item[(3)] {\it Set $M$ equal to $M'\cup \{ v_0v_1\}$.}
\end{itemize}
Note that $M$ is an induced matching in $G$ by construction.
\begin{enumerate}
\item[(4)] {\it Specify values $y_e$ for all edges $e$ 
in $E(G)\setminus E(G')$ such that:}
\begin{enumerate}[(a)]
\item {\it $y\left(\delta_H(e)\right)\geq 1$ for every edge $e$ of $H$.}
\item {\it $y(\delta_H(u)) \geq \frac{2}{3}$ for every vertex $u$ 
in $\big(N_G[v_0]\cup N_G[v_1]\big)\setminus \{ v_0,v_1\}$
of degree at most $2$ in $H$.}
\item {\it $y(\delta_H(u))\geq \frac{1}{3}$ for every vertex $u$ 
in $\{ v_0,v_1\}\cup I$ of degree at most $2$ in $H$.}
\item {\it $y_e=0$ for all edges of $G$ between $V(H)$ and $V(G')$.}
\item {\it $y\left(E(H)\right)\leq \frac{7}{3}$.}
\end{enumerate}
\end{enumerate}
Condition (i) for the good solution pair ${\cal P}'$ for $G'$
and condition (b) together imply 
$y\left(\delta_G(e)\right)\geq 1$ for every edge $e$
of $G$ between $V(H)$ and $V(G')$.
This together with condition (a) implies that 
$(y_e)_{e\in E(G)}$ is a feasible solution of $(D)$.
Condition (ii) for ${\cal P}'$ and
conditions (b) to (e) together 
imply conditions (i) and (ii) for the pair 
${\cal P}=\big(M,(y_e)_{e\in E(G)}\big)$.
Altogether, it follows that ${\cal P}$
is a good solution pair for $G$.

It remains to show that step (4) is possible, 
more precisely, that the values $y_e$ can be specified 
in such a way that conditions (a) to (e) hold.
We show this by considering all $56$ 
possibilities for the structure of $H$
shown in Figures \ref{figconfig1} and \ref{figconfig2}.
Figure \ref{figconfig1} shows the $24$ 
possibilities with $d_H(v_0)+d_H(v_1)<2+3$,
and Figure \ref{figconfig2}
shows the remaining $22$
possibilities with $d_H(v_0)+d_H(v_1)=2+3$.
Since $v_0$ is a vertex of minimum degree,  
each vertex in $I$ has at least $d_G(v_0)$ 
neighbors in $\big(N_G[v_0]\cup N_G[v_1]\big)\setminus \{ v_0,v_1\}$.
Within the figures, 
we also show suitable values for the $y_e$
satisfying conditions (a) to (e),
multiplied by $3$ for the sake of readability.

It is a tedious yet routine matter to verify 
that the figures show all possibilities for the structure of $H$,
and that the conditions (a) to (e) indeed hold.
Steps (1), (3), and (4) can clearly be performed in polynomial time,
which yields an overall polynomial running time,
and completes the proof of Theorem \ref{t2}.

\begin{figure}[H]
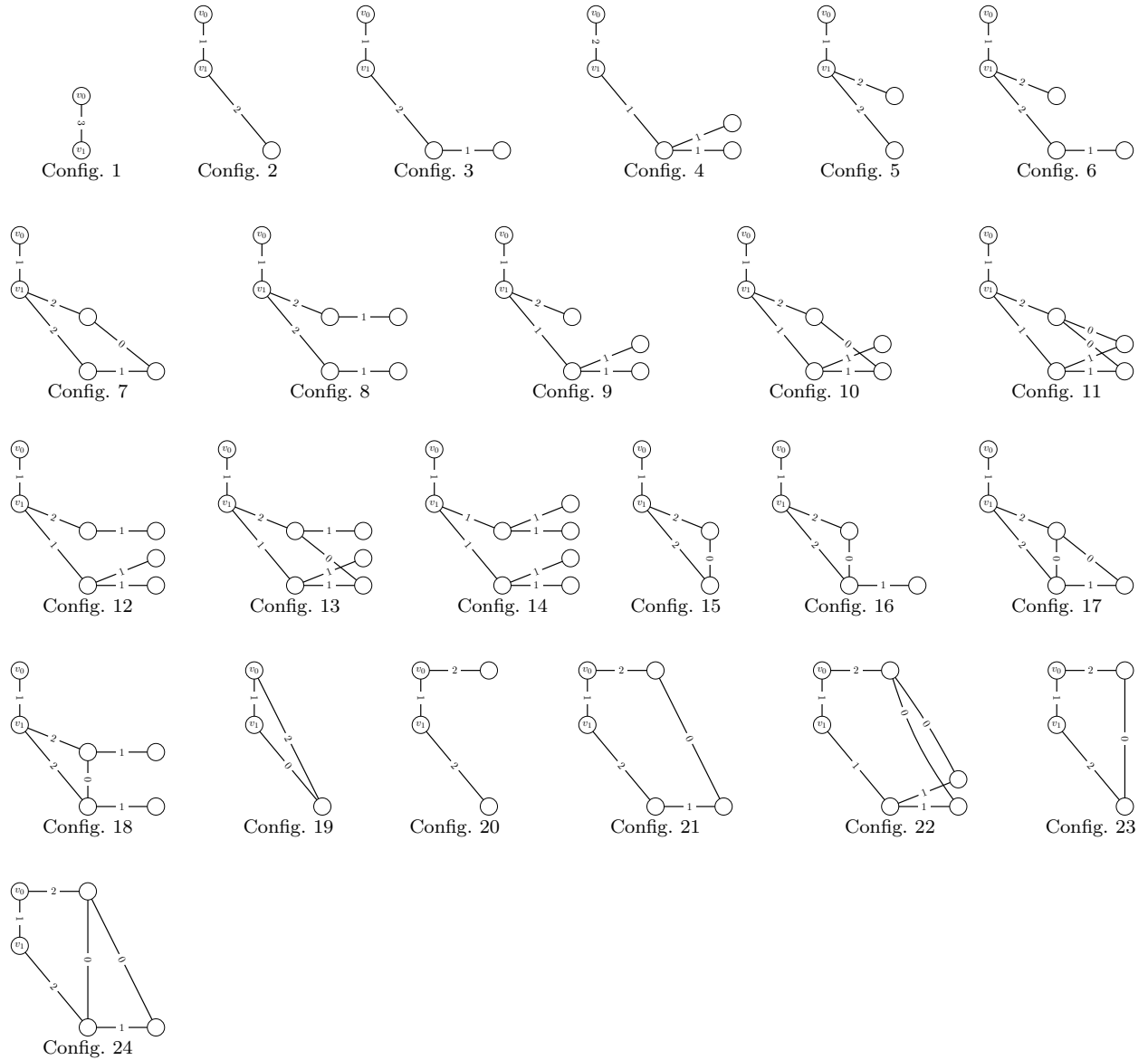

 \hspace{0.5cm}\vspace{0.1cm} 
 $\underset{\text{Config.\ 1}}{ 
 \scalebox{0.4}{

}}$
\caption{The $22$ possibilities for the structure of $H$ 
with $d_H(v_0)+d_H(v_1)=2+3$
together with the values of $3y_e$ for all edges $e$ of $H$.}\label{figconfig2}
\end{figure}

The conditions (i) in the proofs of Theorem \ref{t1} and Theorem \ref{t2} are very similar;
they both allow that good solutions/good solution pairs for the considered subgraphs  
do not have to be changed when constructing the solution for the entire graph.
Unfortunately, for larger values of $\Delta$,
this approach seems not to lead to improved approximation ratios.
Suppose that we impose a condition like $y(\delta_G(u)) \geq c$ for some positive constant $c$
and every vertex $u$ of degree less than $\Delta$.
If $G$ arises from $K_{\lceil \frac{\Delta}{2} \rceil+1}$
by attaching $\lfloor \frac{\Delta}{2} \rfloor$ many leaves to each vertex,
then $\nu_s(G) = 1$ while any solution satisfying the above condition 
has total weight at least $c\left(\lceil \frac{\Delta}{2} \rceil+1\right) \lfloor \frac{\Delta}{2} \rfloor$.
This is also not surprising as combining the best-possible lower bound on the 
induced matching number for graphs of bounded maximum degree \cite{jo}
with the best-possible Theorem \ref{t1} 
does not lead to an improved approximation ratio;
both results are tight for different graphs.

\section{Proof of Theorem \ref{t3}}
For simplification, we are not trying to optimize the non-leading terms of the approximation ratio.
For the rest of this section,
let $(\epsilon, c)$ be an optimal solution of the following quadratic program:
\begin{eqnarray*}
(Q)\left\{ 
\begin{array}{lrcl}
\max & \epsilon &  &   \\[2mm]
s.th.  & 1.5\left( 1+ \frac{\epsilon (2c-1+\epsilon)}{1-c - \epsilon}\right) &\leq & 2c(1-\epsilon) \\
 & \epsilon &\leq & (1-c)^2 \\ 
 & \epsilon + c & < & 1 \\
 & \epsilon, c & > & 0
\end{array}\right. \label{e7}
\end{eqnarray*} 
Standard software yields 
$$\epsilon \approx 0.02005\,\,\,\,\,\,\,\,\,\,\,\,\mbox{ and }\,\,\,\,\,\,\,\,\,\,\,\,c \approx 0.85838.$$
Note that verifying that these values yield a feasible solution for $(Q)$ is a simple matter of calculation, and, in fact, all our arguments only use the feasible of this solution.
Throughout this section, the parameter $f$ is chosen as follows:
$$f = (1-\epsilon)\Delta + \frac{1}{2}\approx 0.97995\Delta+0.5.$$
A key ingredient for the proof of Theorem \ref{t3} is the following lemma.
\begin{lemma} \label{l1}
If $G$ is a graph of maximum degree at most $\Delta$ for some $\Delta\geq 3$, 
and $(x_e)_{e \in E(G)}$ is a feasible solution for $(P)$ that satisfies 
$x(C_G(e)) \geq f$ for every edge $e$ of $G$,
then
\begin{align*}
x(E(G)) \leq  \frac{(1-\epsilon)m(G)}{1.5\Delta}.
\end{align*}
\end{lemma}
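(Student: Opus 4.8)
The plan is to pass from the edge weights $x$ to the induced vertex weights $y_w := x(\delta_G(w)) = \sum_{g \in \delta_G(w)} x_g$ and to play the packing constraints of $(P)$ against the covering hypothesis $x(C_G(e)) \ge f$. First I would record three elementary facts. Since every non-isolated vertex $w$ lies on some edge $e'$ with $\delta_G(w) \subseteq \delta_G(e')$, feasibility of $(P)$ gives $y_w \le 1$. Counting incidences yields $\sum_w y_w = 2\,x(E(G))$. Finally, using $\delta_G(u) \cap \delta_G(v) = \{uv\}$ in a simple graph, for $e = uv$ one has $x(\delta_G(e)) = y_u + y_v - x_e$, so summing the constraints $x(\delta_G(e)) \le 1$ over all edges gives the global relation
\[
\sum_{w \in V(G)} d_G(w)\, y_w \;=\; \sum_{e \in E(G)} x(\delta_G(e)) + x(E(G)) \;\le\; m(G) + x(E(G)).
\]

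Next I would extract the force of the hypothesis. Because $C_G(uv) = \bigcup_{w \in N_G[u] \cup N_G[v]} \delta_G(w)$, we have $x(C_G(e)) \le \sum_{w \in N_G[u] \cup N_G[v]} y_w$, and since $y_w \le 1$ and $|N_G[u] \cup N_G[v]| \le d_G(u) + d_G(v)$, the hypothesis $x(C_G(e)) \ge f$ forces $d_G(u) + d_G(v) \ge f$ for every edge $uv$. Writing $\sum_w d_G(w) y_w = \sum_{g = ab} x_g\,(d_G(a) + d_G(b))$ and inserting this per-edge bound already gives $f\,x(E(G)) \le \sum_w d_G(w) y_w \le m(G) + x(E(G))$, that is, $x(E(G)) \le m(G)/(f-1)$. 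This is the easy part, but it is weaker than the target by essentially a factor of $\tfrac32$: the per-edge estimate only sees degree sums of order $\Delta$, whereas the claimed bound $m(G)/(1.5\Delta)$ needs the $y$-weight to sit on vertices whose degrees sum to order $\tfrac32\Delta$.

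The real work, and the step I expect to be the main obstacle, is to recover this missing factor $\tfrac32$. The point is that $d_G(u) + d_G(v) \ge f$ is tight only when essentially all of $y$ is concentrated, at value close to $1$, on the vertices of $N_G[u] \cup N_G[v]$; forcing this to happen simultaneously at every edge while still respecting all the packing constraints $x(\delta_G(e)) \le 1$ is heavily over-constrained. To exploit this I would fix the threshold $c\Delta$ supplied by $(Q)$ and separate the vertices into those of degree at least $c\Delta$ and the \emph{deficient} ones of smaller degree. Discarding the nonnegative contribution of the deficient vertices in the displayed global relation gives
\[
(2c\Delta - 1)\,x(E(G)) \;\le\; m(G) + c\Delta \sum_{w\,:\,d_G(w) < c\Delta} y_w,
\]
so everything reduces to bounding the total $y$-weight $\sum_{d_G(w) < c\Delta} y_w$ carried by deficient vertices.

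Bounding this deficient weight is the crux. Here I would return to $x(C_G(e)) \ge f$ applied to edges incident with a deficient vertex: such an edge reaches only few vertices through its deficient endpoint, so to accumulate weight $f \approx \Delta$ the other endpoint must be a high-degree vertex carrying many near-saturated ($y_t$ close to $1$) neighbours, which lets one charge the weight of deficient vertices to their rich neighbours and show that $\sum_{d_G(w) < c\Delta} y_w$ is at most a $\Theta(1/\Delta)$ fraction of $m(G)$. The two free parameters are then pinned down by $(Q)$: its first constraint is precisely the inequality that balances the previous display against the deficient-weight bound so that the factor $1.5\Delta$ emerges (up to the lower-order terms we do not optimize), while its second constraint $\epsilon \le (1-c)^2$ absorbs a secondary, quadratic loss coming from how far a deficient vertex's degree may fall below $\Delta$. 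Verifying the charging inequality quantitatively, with the correct dependence on $c$ and $\epsilon$, is where the bulk of the technical effort will lie.
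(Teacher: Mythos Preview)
Your outline is essentially the paper's proof: the same double-counting identity $\sum_e x(\delta_G(e))=\sum_e x_e\,|\delta_G(e)|$, the same threshold $c\Delta$ separating \emph{deficient} vertices $I=\{u:d_G(u)<c\Delta+\tfrac12\}$ from the rest, and the same charging of deficient weight to rich neighbours. The paper's Claim~1 is exactly your charging step made precise: for $u\in I$ and $v\sim u$ one gets $y_v\le g:=\epsilon/(1-c)$ and $d_G(v)\ge(1-g)\Delta+1$, and hence
\[
\sum_{u\in I}y_u \;=\; x\bigl(E_G(I,N_G(I))\bigr)\;\le\; g\,|N_G(I)|\;\le\;\frac{g}{(1-g)\Delta}\bigl(m(G)+m_G(N_G(I))\bigr).
\]

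There is, however, a concrete gap between your displayed inequality and the target. You obtained $(2c\Delta-1)\,x(E(G))\le m(G)+c\Delta\sum_{u\in I}y_u$ by using only the crude bound $x(\delta_G(e))\le 1$ for every edge. Feeding in the estimate above and bounding $m_G(N_G(I))\le m(G)$ yields a numerator $1+\frac{2c\epsilon}{1-c-\epsilon}$, which is strictly larger than the numerator $1+\frac{(2c-1+\epsilon)\epsilon}{1-c-\epsilon}$ appearing in the first constraint of $(Q)$; so your route as written only delivers a smaller $\epsilon$. The missing ingredient is that Claim~1 also gives $x(\delta_G(e))\le 2g$ for every edge $e$ with \emph{both} endpoints in $N_G(I)$. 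Using this sharper bound on those edges replaces your global upper bound $m(G)$ by $m(G)-(1-2g)\,m_G(N_G(I))$, and this negative $m_G(N_G(I))$ term then absorbs its positive counterpart in the deficient-weight bound. That cancellation is precisely what produces the first constraint of $(Q)$.

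Your reading of the second constraint of $(Q)$ is also off. The inequality $\epsilon\le(1-c)^2$ is not a ``secondary quadratic loss'' in the charging; it rewrites as $1-g\ge c$, which together with $d_G(v)\ge(1-g)\Delta+1$ for every $v\in N_G(I)$ guarantees $N_G(I)\cap I=\emptyset$. In other words, it is what makes $I$ independent, so that $\sum_{u\in I}y_u$ literally equals $x(E_G(I,N_G(I)))$ and the displayed bound above applies.
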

We postpone the proof of Lemma \ref{l1} to the end of this section.
The condition in Lemma \ref{l1} 
will be ensured by the following {\sc Local Ratio Preprocessing}, 
which is similar to the technique used in \cite{limeva}.
Note that {\sc Local Ratio Preprocessing} needs to solve the linear program $(P)$ only once,
while \cite{limeva} requires to solve a linear program in each iteration.

\bigskip

\begin{algorithm}[H]
\LinesNumbered\SetAlgoLined
\KwIn{A graph $G$.}
\KwOut{An induced matching $M$ and a subgraph of $G$}
\Begin{
\LinesNumbered\SetAlgoLined
$M\leftarrow \emptyset$\;
Let $(x_e)_{e \in E(G)}$ be an optimal solution of $(P)$\label{line3}\;
\While{$G$ has an edge $e$ satisfying $x(C_G(e)) \leq f$}
{
$M\leftarrow M\cup \{ e\}$; $G\leftarrow G-C_G(e)$\;
}
\Return $(M,G)$\;
}
\caption{{\sc Local Ratio Preprocessing}} \label{alg1}
\end{algorithm}

\bigskip

\begin{proof}[Proof of Theorem \ref{t3}]
Let $G$ be as in the statement of the theorem.
Applying {\sc Local Ratio Preprocessing} to $G$ 
produces in polynomial time an output $(M,G^\prime)$, 
where $G^\prime = G - \bigcup\limits_{e\in M}{C_G(e)}$.
Furthermore, 
the restriction to $E(G')$
of the optimal solution $(x_e)_{e \in E(G)}$ of $(P)$ 
chosen in line \ref{line3}
is a feasible solution for the linear program $(P)$ on the graph $G'$
that satisfies $x(C_{G'}(e))\geq f$ for every edge $e$ of $G'$.
Let $M = \{e_1,\ldots,e_k\}$,
where the edges were added to $M$ in the order $e_1,\ldots,e_k$.
The choice of the edges within {\sc Local Ratio Preprocessing} implies
$x\left(C_G(e_j) \setminus \bigcup\limits_{i=1}^{j-1}{C_G(e_i)}\right)\leq f$
for every $j$ in $[k]$,
and, hence, 
\begin{align*}
f |M| 
\geq \sum_{j=1}^k{x\left(C_G(e_j) \setminus \bigcup_{i=1}^{j-1}{C_G(e_i)}\right)} 
=x\left(\bigcup_{j=1}^k\left(C_G(e_j) \setminus \bigcup_{i=1}^{j-1}{C_G(e_i)}\right)\right) 
=x\left(\bigcup_{j=1}^k{C_G(e_j)}\right).
\end{align*}
Let $M^\prime$ be an induced matching in $G^\prime$ of size at least $\frac{m(G^\prime)}{1.5\Delta^2}$, cf. (\ref{e5}).
As noted in the introduction, 
we can find such an induced matching in polynomial time \cite{gole, felera}.
By construction, 
$M \cup M^\prime$ is an induced matching in $G$.
The choice of $f$ and Lemma \ref{l1} imply
$$|M'|\geq \frac{m(G^\prime)}{1.5\Delta^2}\geq \frac{x(E(G'))}{(1-\epsilon)\Delta} \geq \frac{x(E(G'))}{f},$$ 
and, hence,
\begin{align*}
|M \cup M^\prime| 
&\geq \frac{1}{f} \left( x\left(\bigcup_{e \in M}{C_G(e)}\right) + x(E(G^\prime)) \right)
=\frac{1}{f} x(E(G))
= \frac{\nu_{s}^*(G)}{f},
\end{align*}
which completes the proof.
\end{proof}
We proceed to the proof of Lemma \ref{l1}.
\begin{proof}[Proof of Lemma \ref{l1}]
For notational convenience, we introduce one further parameter: $$g = \frac{\epsilon}{1-c}\approx 0.14158.$$
Let 
$$I = \left\{u \in V(G) : d_G(u) < c \Delta + \frac{1}{2} \right\}.$$
\begin{claim} \label{c1}
If $u$ is in $I$ and $v$ is a neighbor of $u$, then 
$$d_G(v) \geq (1-g) \Delta + 1\mbox{ and }x(\delta_G(v)) \leq g.$$
\end{claim}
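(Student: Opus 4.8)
The plan is to pit the hypothesis $x(C_G(uv))\ge f$ against an upper bound on $x(C_G(uv))$ that I extract from feasibility of $x$ for $(P)$, exploiting that $u\in I$ has small degree. Writing $e=uv$, I would start from the decomposition
\[
C_G(uv)=\delta_G(u)\cup\delta_G(v)\cup\bigcup_{w\in N_G(u)\setminus\{v\}}\delta_G(w)\cup\bigcup_{w\in N_G(v)\setminus\{u\}}\delta_G(w),
\]
which is immediate from $N_G[u]\cup N_G[v]=\{u,v\}\cup(N_G(u)\setminus\{v\})\cup(N_G(v)\setminus\{u\})$. For each neighbor $w\neq v$ of $u$, the $(P)$-constraint for the edge $uw$ reads $x(\delta_G(u))+x(\delta_G(w))-x_{uw}\le 1$, so the edges of $w$ that avoid $u$ satisfy $x\bigl(\delta_G(w)\setminus\{uw\}\bigr)\le 1-x(\delta_G(u))$; the symmetric bound holds on $v$'s side, while $x(\delta_G(u)\cup\delta_G(v))=x(\delta_G(uv))\le 1$.

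Summing these contributions over the pieces of the decomposition gives the key estimate
\[
x(C_G(uv))\le 1+(d_G(u)-1)\bigl(1-x(\delta_G(u))\bigr)+(d_G(v)-1)\bigl(1-x(\delta_G(v))\bigr).
\]
I would then combine this with $x(C_G(uv))\ge f=(1-\epsilon)\Delta+\tfrac12$, with $d_G(u)<c\Delta+\tfrac12$ (as $u\in I$), with $d_G(v)\le\Delta$, and with $x(\delta_G(u))\ge 0$, and try to read off both conclusions. Here I expect to invoke the feasibility of $(\epsilon,c)$ for $(Q)$, in particular $\epsilon\le(1-c)^2$, which is equivalent to $g=\tfrac{\epsilon}{1-c}\le 1-c$ and to $1-\epsilon-c\ge c(1-c)$, so that the threshold $(1-g)\Delta+1$ and the bound $g$ line up with the constants produced by the estimate.

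The hard part is that this first estimate, taken alone, only yields the much weaker bounds $d_G(v)\gtrsim(1-\epsilon-c)\Delta$ and $x(\delta_G(v))\lesssim\epsilon+c$, since the term $(d_G(u)-1)(1-x(\delta_G(u)))$ may be as large as roughly $c\Delta$ and thus already absorbs almost all of $f$. To reach the claimed sharp per-vertex bounds one must show that the entire neighborhood $N_G(u)$ carries almost no $x$-weight, forcing essentially all of the weight demanded by $x(C_G(uv))\ge f$ onto the $\delta_G$-stars of $v$'s neighbors; this is precisely what simultaneously drives $d_G(v)$ up to nearly $\Delta$ and pins $x(\delta_G(v))$ below $g$. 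I expect to obtain this lightness of $N_G(u)$ by summing the hypothesis $x(C_G(uw))\ge f$ over all edges $uw$ at $u$ and feeding the resulting aggregate control on $\sum_{w\in N_G(u)}x(\delta_G(w))$ back into the per-edge estimate above. Making this feedback quantitatively tight enough to match the constants coming from $(Q)$ is the main obstacle; once it is in place, the remaining verification is the routine calculation already flagged for the chosen values of $(\epsilon,c)$.
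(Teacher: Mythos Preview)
Your per-edge estimate
\[
x(C_G(uv))\le 1+(d_G(u)-1)\bigl(1-x(\delta_G(u))\bigr)+(d_G(v)-1)\bigl(1-x(\delta_G(v))\bigr)
\]
is correct, and you are right that on its own it is too weak. But the proposed fix---summing $x(C_G(uw))\ge f$ over $w\in N_G(u)$ and feeding the result back---has a real gap. What you need is an \emph{upper} bound on $S=\sum_{w\in N_G(u)}x(\delta_G(w))$ (in order to force the remaining term $(d_G(v)-1)(1-x(\delta_G(v)))$ to be large, hence $x(\delta_G(v))$ small). However, in any upper bound for $x(C_G(uw))$ the quantity $S$ sits as a positive contribution, so combining with the \emph{lower} bound $x(C_G(uw))\ge f$ produces, after summing, only inequalities of the type ``something $\ge f-S$'', i.e.\ lower bounds on $S$ or on coupled degree--weight products. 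There is no mechanism here that forces $S$ to be small; the hypothesis $x(C_G(\cdot))\ge f$ simply has the wrong sign for that. Moreover, even if you did obtain a bound like $S\le d_G(u)g$, that still would not give the per-vertex conclusion $x(\delta_G(v))\le g$, since the sum could be concentrated on a single neighbor. You flag this step as ``the main obstacle'' but do not resolve it; as written, the argument does not close.

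The idea you are missing is much simpler than any feedback loop: choose $v'\in N_G(u)$ \emph{maximizing} $x(\delta_G(v'))$. Then automatically $\sum_{w\in N_G(u)}x(\delta_G(w))\le d_G(u)\,x(\delta_G(v'))$, and a \emph{single} use of $x(C_G(uv'))\ge f$ together with $x(\delta_G(w)\setminus\{v'w\})\le 1-x(\delta_G(v'))$ for $w\in N_G(v')$ yields
\[
f\le d_G(u)\,x(\delta_G(v'))+(\Delta-1)\bigl(1-x(\delta_G(v'))\bigr)\le \Delta\bigl(1-(1-c)x(\delta_G(v'))\bigr)+\tfrac12,
\]
which solves directly to $x(\delta_G(v'))\le \epsilon/(1-c)=g$, and hence $x(\delta_G(v))\le g$ for every neighbor $v$ of $u$. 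With this uniform bound in hand, one more application of $x(C_G(uv))\ge f$, now estimating the $u$-side by $d_G(u)g\le(c\Delta+\tfrac12)g$, gives $d_G(v)\ge f-gc\Delta+\tfrac12=(1-g)\Delta+1$ after the identity $1-\epsilon-gc=1-g$. No summation or bootstrapping is needed; the whole argument runs off the single extremal edge $uv'$.
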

\begin{proof}[Proof of Claim \ref{c1}]
Let $v^\prime$ be a neighbor of $u$ maximizing $x\left(\delta_G(v^\prime)\right)$.
For every neighbor $w$ of $v'$,
the constraints in $(P)$ imply
$$1\geq x\left(\delta_G(v'w) \right)
=x\left(\delta_G(v')\right)+x\left(\delta_G(w)\setminus \{ v'w\}\right),$$
which implies
$x\left(\delta_G(w)\setminus \{ v'w\}\right)\leq 1-x\left(\delta_G(v')\right)$
and $x\left(\delta_G(v')\right)\leq 1$.

Now, 
\begin{align*}
f & \leq x\left(C_G(uv^\prime)\right)\\
& \leq
\sum_{w\in N_G(u)}x\left(\delta_G(w)\right)
+\sum_{w\in N_G(v')\setminus \{ u\}}x\left(\delta_G(w)\setminus \{ v'w\}\right)\\
&\leq 
d_G(u) x\left(\delta_G(v^\prime)\right)
+(\Delta-1)\left(1-x\left(\delta_G(v^\prime)\right)\right)\\
&\leq c \Delta x\left(\delta_G(v^\prime)\right) + \frac{1}{2}
+\Delta\left(1-x\left(\delta_G(v^\prime)\right)\right)\\
&= \Delta\left(1-(1-c)x\left(\delta_G(v^\prime)\right)\right) + \frac{1}{2},
 \end{align*}
which, by the definitions of $f$ and $g$, implies 
$$x(\delta_G(v)) \leq x(\delta_G(v^\prime)) \leq g.$$
Since $g < 1$, we obtain that
\begin{align*}
f & \leq x(C_G(uv))\\
& \leq
\sum_{w\in N_G(u)}x\left(\delta_G(w)\right)
+\sum_{w\in N_G(v)\setminus \{ u\}}x\left(\delta_G(w)\setminus \{ vw\}\right)\\
& \leq d_G(u)g+(d_G(v)-1)\\
& \leq \left( c \Delta + \frac{1}{2}\right)g+d_G(v) - 1\\
& \leq d_G(v) + g c \Delta - \frac{1}{2}.
\end{align*}
By the definitions of $f$ and $g$, this implies that 
$$d_G(v) \geq 
f-g c \Delta+\frac{1}{2}
=\Delta(1-\epsilon - g c) +1 = \Delta(1-g) + 1,$$
which completes the proof of the claim.
\end{proof}
The condition $\epsilon\leq (1-c)^2$ within (Q) implies $1-g\geq c$.
Therefore, Claim \ref{c1} implies that the set $I$ is independent, and that
\begin{align}
 x\left(E_G(I,N_G(I))\right) &\leq \sum_{v \in N_G(I)}{x(\delta_G(v))}\nonumber \\
				&\leq g |N_G(I)|\nonumber \\
				&\leq \frac{g}{\Delta(1-g)} \sum_{v \in N_G(I)}{d_G(v)} \nonumber\\
				&\leq \frac{g}{\Delta(1-g)}\Big(m(G) + m_G(N_G(I))\Big).\label{eneu1}
\end{align}
By Claim \ref{c1}, each edge $uv$ in $E_G(N_G(I))$ satisfies 
$$x\left(\delta_G(uv)\right) 
=x\left(\delta_G(u)\right) + x\left(\delta_G(v)\right) -x_{uv}\leq 2g.$$
Note that $e'\in \delta_G(e)$ if and only if $e\in \delta_G(e')$
for every two edges $e$ and $e'$ of $G$, and double-counting implies
$$\sum_{e \in E(G)}{x(\delta_G(e))}=\sum_{e \in E(G)}{x_e |\delta_G(e)|}.$$
For every edge $e$ of $G$, 
we obtain that
\begin{align*}
f \leq x\left(C_G(e)\right) \leq \sum_{e^\prime \in \delta_G(e)}{x\left(\delta_G(e^\prime)\right)}  \leq |\delta_G(e)|.
\end{align*}
If $uv$ is an edge in $E(G)\setminus E_G(I,N_G(I))$,
then, by the definition of $I$,
$$|\delta_G(uv)|=|\delta_G(u)|+|\delta_G(v)|-1
\geq c\Delta+\frac{1}{2}+c\Delta+\frac{1}{2}-1
=2c\Delta.$$
Combining these four observations, we obtain  
\begin{align}
m(G)-(1-2g)m_G(N_G(I)) 
&= (m(G)-m_G(N_G(I)))+2gm_G(N_G(I))\nonumber \\
&=\sum_{e \in E(G)\setminus E_G(N_G(I))}1+\sum_{e \in E_G(N_G(I))}2g\nonumber  \\
&\geq \sum_{e \in E(G)\setminus E_G(N_G(I))}{x(\delta_G(e))}+\sum_{e \in E_G(N_G(I))}{x(\delta_G(e))}\nonumber  \\
&=\sum_{e \in E(G)}{x(\delta_G(e))}\nonumber  \\
&=\sum_{e \in E(G)}{x_e |\delta_G(e)|}\nonumber  \\
&=\sum_{e \in E_G(I,N_G(I))}{x_e |\delta_G(e)|}
+\sum_{e \in E(G)\setminus E_G(I,N_G(I))}{x_e |\delta_G(e)|}\nonumber  \\
&\geq f x(E_G(I,N_G(I))) + 2c \Delta x(E(G)\setminus E_G(I,N_G(I))).\label{eneu2}
\end{align}
Since $2c\approx 1.71676$ and $\Delta\geq 3$, we have $2c\Delta \geq f$.
Furthermore,
$$2g+\frac{(2c-1+\epsilon)g}{1-g}\approx 0.40467<1,$$ and, by $(Q)$ and the definition of $g$,
$$\left(1 + \frac{(2c-1+\epsilon)g}{1-g}\right)
=\left(1 + \frac{\epsilon(2c-1+\epsilon)}{1-c-\epsilon}\right)
\leq \frac{2c(1-\epsilon)}{1.5}.$$
Using these inequalities together with (\ref{eneu1}) and (\ref{eneu2}) yields 
\begin{align*}
2c\Delta x(E(G)) 
&= \Big(f x(E_G(I,N_G(I)))+2c\Delta x(E(G)\setminus E_G(I,N_G(I)))\Big)
+(2c\Delta-f) x(E_G(I,N_G(I)))\\
&\leq \Big(m(G) - (1-2g)m(N_G(I))\Big) + \frac{(2c\Delta - f)g}{\Delta(1-g)}(m(G) + m(N_G(I))) \\ 
&= m(G)\left(1 + \frac{(2c\Delta-f)g}{\Delta(1-g)}\right) 
+m(N_G(I))\left(2g+\frac{(2c\Delta-f)g}{\Delta(1-g)}-1\right) \\						
&\leq m(G)\left(1 + \frac{(2c-1+\epsilon)g}{(1-g)}\right) 
+m(N_G(I)) \left(2g+\frac{(2c-1+\epsilon)g}{(1-g)}-1\right)\\
&\leq m(G)\left(1 + \frac{(2c-1+\epsilon)g}{(1-g)}\right)\\
&\leq \frac{2c(1-\epsilon)}{1.5}m(G), 				    
\end{align*}
which completes the proof of Lemma \ref{l1}.
\end{proof}
Using this proof technique, what is the largest $\epsilon$
we can hope for? Let $\epsilon = \frac{1}{7} + \frac{1}{\Delta}$,
and let $\Delta$ be sufficiently large such that $0.75 \Delta$ is an integer.
Let $G$ be a bipartite graph with partite set $A$ and $B$
such that each vertex in $A$ has degree $0.75 \Delta + 2$,
each vertex in $B$ has degree $\Delta$, and $G$
has girth at least $6$.
It is well-known that such graphs exist, cf. e.g. \cite{fu}.
Setting $x_e = \frac{1}{1.75 \Delta + 1}$ for every edge $e$ of $G$
yields a feasible solution of both program $(P)$ and $(D)$,
that is, $(x_e)_{e \in E(G)}$ is optimal.
The girth condition and 
$(1.75 \Delta + 1) (1 - \epsilon) < 1.5 \Delta$
imply that
$$x(C_G(e)) \geq \frac{2 \Delta(0.75 \Delta +2)}{1.75\Delta + 1}  - 1 
	   > \frac{2(0.75 \Delta +2)(1-\epsilon)}{1.5} - 1
	   = (1-\epsilon)\Delta + \frac{5 - 8 \epsilon}{3}
	   > (1-\epsilon)\Delta + \frac{1}{2}$$
for sufficiently large $\Delta$.
Furthermore, $\nu_s^*(G) \geq \frac{m(G)}{1.75 \Delta + 1} > \frac{(1-\epsilon)m(G)}{1.5\Delta}$,
that is, Lemma \ref{l1} fails whenever $\epsilon \geq \frac{1}{7} + \frac{1}{\Delta}$.


\begin{thebibliography}{}
\bibitem{dadelo} K.K. Dabrowski, M. Demange, V.V. Lozin, New results on maximum induced matchings in bipartite graphs and beyond, Theoretical Computer Science 478 (2013) 33-40.
\bibitem{dumazi} W. Duckworth, D.F. Manlove, M. Zito, On the approximability of the maximum induced matching problem, Journal of Discrete Algorithms 3 (2005) 79-91.
\bibitem{fascgytu} R.J. Faudree, R.H. Schelp, A. Gy\'{a}rf\'{a}s, Zs. Tuza, Induced matchings in bipartite graphs, Discrete Mathematics 78 (1989) 83-87.
\bibitem{fu} Z. F\"{u}redi, F. Lazebnik, A. Seress, V.A. Ustimenko, A.J. Woldar, Graphs of prescribed girth and bi-degree, Journal of Combinatorial Theory, Series B 64 (1995) 228-239. 
\bibitem{felera} M. F\"{u}rst, M. Leichter, D. Rautenbach, Locally searching for large induced matchings, Theoretical Computer Science 720 (2018) 64-72.
\bibitem{gole} Z. Gotthilf, M. Lewenstein, Tighter approximations for maximum induced matchings in regular graphs, Lecture Notes in Computer Science 3879 (2006) 270-281.
\bibitem{jo} F. Joos, Induced matchings in graphs of bounded degree, SIAM Journal on Discrete Mathematics 30 (2016) 1876-1882.
\bibitem{jorasa} F. Joos, D. Rautenbach, T. Sasse, Induced matchings in subcubic graphs, SIAM Journal on Discrete Mathematics 28 (2014) 468-473.
\bibitem{limeva} M.C. Lin, J. Mestre, S. Vasiliev, Approximating weighted induced matchings, Discrete Applied Mathematics 243 (2018) 304-310.
\bibitem{orfigozv} Y. Orlovich, G. Finke, V. Gordon, I. Zverovich, Approximability results for the maximum and minimum maximal induced matching problems, Discrete Optimization 5 (2008) 584-593.
\bibitem{ra} D. Rautenbach, Two greedy consequences for maximum induced matchings, Theoretical Computer Science 602 (2015) 32-38.
\bibitem{zi} M. Zito, Induced matchings in regular graphs and trees, Lecture Notes in Computer Science 1665 (1999) 89-101.
\end{thebibliography}
\end{document}